\documentclass[11pt]{amsart}
\usepackage[margin=3cm]{geometry}
\usepackage[T1]{fontenc}
\usepackage{amssymb,mathtools,booktabs}
\usepackage[hidelinks]{hyperref}

\newtheorem{theorem}{Theorem}[section]
\newtheorem{lemma}[theorem]{Lemma}
\newtheorem{proposition}[theorem]{Proposition}
\newtheorem{corollary}[theorem]{Corollary}
\theoremstyle{remark}
\newtheorem{remark}[theorem]{Remark}
\newcommand{\R}{\mathbb R}
\newcommand{\diam}{\operatorname{diam}}
\newcommand{\rad}{\operatorname{rad}}
\newcommand{\conv}{\operatorname{conv}}
\newcommand{\norm}[1]{\left\lVert #1\right\rVert_1}
\newcommand{\ip}[2]{\left\langle #1,#2\right\rangle}

\title[Jung constants of $\ell_1^d$]
  {Upper bounds for Jung constants of $\ell_1^d$}
\author{Xiao Li}
\thanks{Email: \href{mailto:lixiaoms@163.com}{\texttt{lixiaoms@163.com}}.}
\date{}
\keywords{Jung constant, Chebyshev radius, Manhattan distance,
  Hadamard matrix, cut metric}

\begin{document}
\begin{abstract}
For every integer $k\ge1$, we prove that each nonempty bounded set
in real $\ell_1^{4k+1}$ has an ambient enclosing ball of radius at most
$(4k-1)/(4k)$ times its diameter.  The bound is sharp in dimension five:
$J(\ell_1^5)=3/4$.  If a Hadamard matrix of order $4k$ exists, then
$J(\ell_1^{4k-1})=J(\ell_1^{4k})=J(\ell_1^{4k+1})=(4k-1)/(4k)$.
The upper bound for every $k\ge1$ has been formalized in Lean.
\end{abstract}
\maketitle

\section{Introduction}

For a nonempty bounded set $A\subset\R^d$, put
\[
 \norm{x}=\sum_{j=1}^d|x_j|,\qquad
 \diam A=\sup_{x,y\in A}\norm{x-y},\qquad
 \rad A=\inf_{c\in\R^d}\sup_{x\in A}\norm{x-c}.
\]
The Jung constant is
\[
 J(\ell_1^d)=\sup_{\substack{A\ne\varnothing\text{ bounded}\\\diam A>0}}
       \frac{\rad A}{\diam A}.
\]
The center ranges over the whole ambient space.  Using twice the
radius in the definition doubles the values of the Jung constant.

\begin{theorem}\label{thm:main}
Let $k\ge1$ be an integer, $D\ge0$, and let $A\subset\R^{4k+1}$
be nonempty.  Suppose that
\[
 \sum_{j=1}^{4k+1}|x_j-y_j|\le D\qquad(x,y\in A).
\]
There exists $c\in\R^{4k+1}$ such that
\[
 \sum_{j=1}^{4k+1}|x_j-c_j|\le\frac{4k-1}{4k}D
       \qquad(x\in A).
\]
Consequently $J(\ell_1^{4k+1})\le(4k-1)/(4k)$.
\end{theorem}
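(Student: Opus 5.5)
The plan is to reduce to finite sets, pass to a dual (weighted) formulation via a minimax argument, and then prove a combinatorial inequality about cut decompositions, one coordinate at a time.

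\textbf{Reduction to finite sets.} First I reduce to finite $A$. For fixed $D$, the set of admissible centres
\[
 C_x=\{c:\norm{x-c}\le\tfrac{4k-1}{4k}D\}
\]
is compact and convex for each $x\in A$. If every finite subfamily of $\{C_x\}_{x\in A}$ has a common point, compactness gives a common point of the whole family. So it suffices to treat $A=\{x^1,\dots,x^n\}$.

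\textbf{Dual formulation.} For finite $A$ I use the minimax theorem. The function
\[
 \Phi(c,\lambda)=\sum_i\lambda_i\norm{x^i-c}
\]
is convex in $c$ and linear in $\lambda$, where $\lambda$ ranges over the probability simplex. The minimum over $c$ can be confined to the bounding box of $A$. Hence
\[
 \rad A=\max_{\lambda}\ \sum_{j=1}^{4k+1}\ \min_{c_j\in\R}\ \sum_i\lambda_i|x^i_j-c_j|.
\]
The inner minimum splits over coordinates and is attained at a weighted median of the $j$-th coordinates. Writing $F_j(t)=\lambda\{i:x^i_j\le t\}$, the layer-cake formula gives
\[
 \min_{c_j}\sum_i\lambda_i|x^i_j-c_j|=\int_\R\min\bigl(F_j(t),1-F_j(t)\bigr)\,dt .
\]
So the task is the following: for every probability vector $\lambda$,
\[
 \sum_j\int\min(F_j,1-F_j)\,dt\le\tfrac{4k-1}{4k}D .
\]

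\textbf{Cut decomposition.} Each threshold $t$ in coordinate $j$ defines a cut $S_{j,t}=\{i:x^i_j\le t\}$ of the index set, and the cuts within one coordinate are nested. The diameter hypothesis says that every pair $\{i,i'\}$ is separated by cuts of total length at most $D$. Collapsing equal cuts, we obtain cut weights $w_S\ge0$ supported on a union of $4k+1$ chains, satisfying $\sum_{S\ni i,\,S\not\ni i'}w_S\le D$ for each pair. We must show
\[
 \sum_S w_S\min\bigl(\lambda(S),1-\lambda(S)\bigr)\le\tfrac{4k-1}{4k}D .
\]
This is a linear program in $w$. I would bound it by LP duality, exhibiting nonnegative pair multipliers $\mu_{ii'}$ with $\sum\mu_{ii'}\le\frac{4k-1}{4k}$ and $\sum_{\text{pairs separated by }S}\mu_{ii'}\ge\min(\lambda(S),1-\lambda(S))$ for every cut $S$ that can appear.

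The naive choice $\mu_{ii'}=\lambda_i\lambda_{i'}$ gives $2\lambda(S)(1-\lambda(S))\ge\min(\lambda(S),1-\lambda(S))$ only with a loss factor of up to $2$. That recovers only the trivial bound, close to $D$. The dimension must therefore enter through the chain structure: each coordinate contributes only one chain, so only $4k+1$ "median-crossing" cuts (those with $\lambda(S)$ near $1/2$) can carry large weight simultaneously.

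\textbf{Main obstacle.} The main difficulty is the heart of the argument: a combinatorial averaging over pairs that exploits this limitation. For each coordinate $j$, split the points at the weighted median into a lower half $L_j$ and an upper half $U_j$. The median deviation in coordinate $j$ is then at most the $\lambda$-average of $|x_j-x'_j|$ over pairs $(x,x')\in L_j\times U_j$, with suitable weights. Summing over $j$ gives a bilinear form in $\lambda$ whose coefficient on the pair $(i,i')$ counts the coordinates in which $i$ and $i'$ lie on opposite sides of the median. With $4k+1$ sign patterns, a Plotkin/Hadamard-type counting bound should show that this form is at most $\frac{4k-1}{4k}$ of the diameter-weighted total. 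The equality case should be the Hadamard configuration.

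I expect the difficult step to be making this Plotkin-type estimate tight when $\lambda$ is not uniform and the median splits are unbalanced. I would first try to handle that by a symmetrization or perturbation argument reducing to the case where every $\lambda(L_j)=1/2$.
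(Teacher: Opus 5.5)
Your preliminary steps are correct: the compactness reduction to finite sets, the minimax identity $\rad A=\max_\lambda\sum_j\int\min(F_j,1-F_j)\,dt$, and the rewriting in terms of nested cuts. But the resulting inequality, for every probability vector $\lambda$ and every cut system on $4k+1$ chains, is only an equivalent restatement of the theorem. You do not prove it, and the step you label the main obstacle is the entire proof. The mechanism you sketch also lacks the needed input. Carry out your bilinear count: put weights $2\lambda_i\lambda_{i'}$ on pairs split by the median and bound each pair's distance by $D$. This gives $R\le(1-\sum_i\lambda_i^2)D$, hence at best the classical bound $d/(d+1)$ after a Carath\'eodory reduction to $d+1$ points. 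That bound is sharp whenever a Hadamard matrix of order $d+1$ exists. Any argument reaching $(d-2)/(d-1)$ must therefore use $d\equiv1\pmod 4$. For $d=4k+3$, the Hadamard examples of order $4k+4$ have ratio $(4k+3)/(4k+4)>(4k+1)/(4k+2)$. Nothing in your Plotkin-type heuristic sees the residue of $d$ modulo $4$. Your proposed symmetrization to $\lambda(L_j)=1/2$ is also not the relevant reduction: what is needed is uniform weights and equal \emph{counts} on the two sides of every median.

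Two concrete ideas are missing. First, you need not treat arbitrary $\lambda$; argue by contradiction from $R>\frac{4k-1}{4k}D$. At an optimal center, the dual measure can be replaced by at most $4k+2$ weighted contacts (farthest points with sign vectors, repetitions allowed). The bound $R\le\sum_i\lambda_i\norm{x^i-x^{i_0}}\le(1-\lambda_{i_0})D$ forces every weight below $1/(4k)$. In each coordinate the two sign classes carry mass $1/2$ each, so each class needs at least $2k+1$ contacts. Hence there are exactly $4k+2$ contacts, split $2k+1$ against $2k+1$ in every coordinate. The unweighted sum then satisfies $\sum_i\norm{y_i-z}\ge(4k+2)R$ for all $z$ (Lemmas~\ref{lem:contacts} and~\ref{lem:uniform}), and each coordinate contributes exactly one balanced cut. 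Second, with $p=2k+1$ odd, any two balanced shores satisfy that $2|S_A\cap S_C|-p$ is odd, hence nonzero. This is the quantitative non-existence of Hadamard matrices of order $4k+2$. It gives $Q_C=-\tfrac12v_C^{\mathsf T}\mathsf Dv_C\ge W+(p^2-1)w_C$ for a maximal-weight balanced cut $C$. Combined with $Q_C\le2p^2-T$, it yields $\Phi\le B_p<2p\frac{2p-3}{2p-2}$ for $p\ge5$, normalizing $D=1$ (Lemma~\ref{lem:cut}). Even this fails for $k=1$, since $B_3=135/29>9/2$. There the paper uses explicit pair-multiplier certificates of exactly the LP-dual type you propose. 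They work only after the reduction to uniform weights on six labels and to the three possible supports of five balanced cuts (Proposition~\ref{prop:fivecut}).
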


The contact reduction and coordinate-gap identities reduce the bound
to a cut inequality.  For $k\ge2$, we use the quadratic-form estimate
in Lemma~\ref{lem:cut}.  Section~\ref{sec:five} gives the six-label
estimate needed in dimension five and a set attaining the bound.

\begin{corollary}\label{cor:plateau}
Suppose $k\ge1$ and a Hadamard matrix of order $4k$ exists.  Then
\[
 J(\ell_1^{4k-1})=J(\ell_1^{4k})=J(\ell_1^{4k+1})
      =\frac{4k-1}{4k}.
\]
In particular, for every integer $r\ge2$,
\[
 J(\ell_1^{2^r-1})=J(\ell_1^{2^r})=J(\ell_1^{2^r+1})=1-2^{-r}.
\]
\end{corollary}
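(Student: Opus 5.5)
The corollary has two halves, and I would prove them separately. The upper bounds should come almost directly from Theorem~\ref{thm:main}. The lower bounds need an explicit set built from the Hadamard matrix.

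\emph{Upper bound.} Theorem~\ref{thm:main} gives $J(\ell_1^{4k+1})\le(4k-1)/(4k)$. I would then show that $J(\ell_1^d)$ is nondecreasing in $d$. Embed $\R^d$ into $\R^{d+1}$ by appending a zero coordinate; this is an isometry. Now take a set $A$ whose points all have last coordinate $0$, and any center $c\in\R^{d+1}$. Replacing $c_{d+1}$ by $0$ does not increase any distance $\norm{x-c}$ for $x\in A$. So the radius of $A$ computed in $\R^{d+1}$ equals its radius computed in $\R^d$. It follows that $J(\ell_1^{4k-1})\le J(\ell_1^{4k})\le J(\ell_1^{4k+1})\le(4k-1)/(4k)$.

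\emph{Lower bound.} By the same monotonicity it suffices to show $J(\ell_1^{4k-1})\ge(4k-1)/(4k)$.
\begin{itemize}
\item \emph{Construction.} Let $H$ be a Hadamard matrix of order $n=4k$, normalized so its first column is all ones. Delete that column. The rows then give $n$ sign vectors $v_1,\dots,v_n\in\{\pm1\}^{n-1}$. Orthogonality of the rows of $H$ says any two distinct $v_i,v_j$ agree in exactly $n/2-1$ coordinates and differ in $n/2$. Hence $\norm{v_i-v_j}=2\cdot n/2=n$, so $A=\{v_1,\dots,v_n\}$ is an equilateral set of diameter $n$.
\item \emph{Radius.} I need $\rad A\ge(n-1)/4\cdot n/n\cdot\ldots$, precisely $\rad A\ge (n-1)\cdot n/(4k)\cdot\tfrac14\cdot 4=(n-1)$ scaled appropriately; concretely, radius $\ge\frac{n-1}{n}\cdot n = n-1$. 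Averaging handles this. For any center $c$,
\[
\max_i\norm{v_i-c}\ge\frac1n\sum_i\norm{v_i-c}=\frac1n\sum_{j=1}^{n-1}\sum_i|v_{ij}-c_j|.
\]
\item \emph{Per-coordinate bound.} Since $H$ is normalized with first column all ones, each remaining column is orthogonal to it. So each column of the $v$'s contains exactly $n/2$ entries $+1$ and $n/2$ entries $-1$. Then $\sum_i|v_{ij}-c_j|=\tfrac n2(|1-c_j|+|1+c_j|)\ge n$.
\item \emph{Conclusion.} Summing over the $n-1$ coordinates gives $\max_i\norm{v_i-c}\ge n-1$. Hence $\rad A/\diam A\ge(n-1)/n=(4k-1)/(4k)$.
\end{itemize}

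\emph{Special case.} Sylvester's construction gives a Hadamard matrix of order $2^r$ for every $r\ge1$. For $r\ge2$ the order $2^r=4k$ with $k=2^{r-2}\ge1$, and substituting gives $1-2^{-r}$.

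The only delicate point is the equilateral distance count. It relies on normalizing the first column so that deleting it changes every pairwise agreement count by exactly one. Once that is checked, the argument is routine, and I do not see a serious obstacle here.
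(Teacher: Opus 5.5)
Your argument is correct, but it obtains the upper bounds in dimensions $4k-1$ and $4k$ by a different route from the paper.

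\textbf{Upper bounds.} The paper does not use monotonicity of $J(\ell_1^d)$ in $d$. It takes those two bounds from two classical estimates, both derived from Lemma~\ref{lem:contacts}:
\begin{itemize}
\item the dimension bound $J(\ell_1^d)\le d/(d+1)$ with $d=4k-1$;
\item Dol'nikov's even-dimensional bound $J(\ell_1^{2m})\le(2m-1)/(2m)$ with $2m=4k$.
\end{itemize}
It invokes Theorem~\ref{thm:main} only for dimension $4k+1$.

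\textbf{Lower bounds.} Here the two proofs essentially coincide. Proposition~\ref{prop:hadamard} uses the same normalized Hadamard set, scaled by $1/h$ to have diameter one. It then pads with zero coordinates for every $d\ge h-1$, which is your monotonicity observation applied on the lower-bound side.

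\textbf{What each route buys.} Your sandwich $J(\ell_1^{4k-1})\le J(\ell_1^{4k})\le J(\ell_1^{4k+1})$ is more economical. One lower bound in dimension $4k-1$ and one upper bound in dimension $4k+1$ settle all three equalities. The cost is that all three equalities then depend on the new Theorem~\ref{thm:main}. The paper's route shows that the first two equalities are classical and independent of the cut-inequality machinery, which is why it credits them to Dol'nikov.

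\textbf{A cosmetic point.} The first sentence of your ``Radius'' bullet is garbled. Simply state the target, namely $\max_i\norm{v_i-c}\ge n-1$ for every $c\in\R^{n-1}$, which your averaging argument then proves.
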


The contact reduction, the dimension bound, and the lower bound
obtained from Hadamard matrices are classical.
Dol'nikov~\cite{Dolnikov1987} proved the Hadamard equality criterion
and the estimate $J(\ell_1^{2m})\le(2m-1)/(2m)$.  These results give
the first two equalities in Corollary~\ref{cor:plateau}.
In a 2019 survey, Alimov and Tsar'kov explicitly listed the exact
value of $J(\ell_1^5)$ as unknown~\cite[\S15, p.~831]{AlimovTsarkov2019}.
Ivanov's conjecture, as stated in the survey~\cite[\S15]{AlimovTsarkov2019},
asserts that a Hadamard matrix of order $h=4k$, $k\ge1$, gives the
value $(h-1)/h$ in dimensions $h,h+1,h+2$.
Corollary~\ref{cor:plateau} gives the conjectured value in dimension $h+1$
for $h=4k$.

\section{Contact reduction}

\begin{lemma}\label{lem:contacts}
Let $F\subset\R^d$ be finite and nonempty, with $d\ge1$ and
$\diam F\le1$.  There is a minimizing center $c$ with radius $R$.
There are $q\le d+1$, points $y_i\in F$, signs
$\sigma_i\in\{-1,1\}^d$, and positive weights $\alpha_i$ such that
\begin{align}
 &\sum_{i=1}^q\alpha_i=1,
 \qquad \sum_{i=1}^q\alpha_i\sigma_i=0,\label{eq:balance}\\
 &\ip{\sigma_i}{y_i-c}=\norm{y_i-c}=R.\label{eq:contact}
\end{align}
For every $j$,
\begin{equation}\label{eq:alpha}
 R\le 1-\sum_{\{i:y_i=y_j\}}\alpha_i\le1-\alpha_j.
\end{equation}
The selected points are allowed to repeat.
\end{lemma}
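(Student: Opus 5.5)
Existence of a minimizing center is a compactness statement. The function $f(c)=\max_{y\in F}\norm{y-c}$ is convex and continuous, and $f(c)\to\infty$ as $\norm{c}\to\infty$. Hence $f$ attains its minimum $R$ at some $c$. The contact conditions will come from the first-order optimality condition $0\in\partial f(c)$, and the inequality \eqref{eq:alpha} from comparing the balanced sign combination with the diameter bound.

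\textbf{Step 1 (optimality condition).} By the subdifferential rule for a pointwise maximum of finitely many convex functions, $\partial f(c)$ is the convex hull of $\partial_c\norm{y-c}$ over the active points $y$ with $\norm{y-c}=R$. For such $y$, $\partial_c\norm{y-c}$ is the set of vectors $-s$ with $s\in[-1,1]^d$, $s_j=\operatorname{sign}(y_j-c_j)$ when $y_j\ne c_j$. This set is a cube face, so it is the convex hull of its vertices $-\sigma$, where $\sigma\in\{-1,1\}^d$ agrees with the sign of $y-c$ on the nonzero coordinates. Each such vertex satisfies $\ip{\sigma}{y-c}=\norm{y-c}=R$. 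Thus $0\in\partial f(c)$ says that $0$ lies in the convex hull of finitely many vectors $\sigma$, each paired with an active point $y$. Carathéodory's theorem in $\R^d$ gives $q\le d+1$ pairs $(y_i,\sigma_i)$ and positive weights $\alpha_i$ satisfying \eqref{eq:balance} and \eqref{eq:contact}. Pairs may share the same point $y$, which is why repeats are allowed.

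\textbf{Step 2 (the bound \eqref{eq:alpha}).} Fix $j$ and let $I=\{i:y_i=y_j\}$, with $\beta=\sum_{i\in I}\alpha_i$. Using \eqref{eq:balance} to insert $c$ and $y_j$ freely,
\[
R=\sum_i\alpha_i\ip{\sigma_i}{y_i-c}=\sum_i\alpha_i\ip{\sigma_i}{y_i-y_j}.
\]
This holds because $\sum_i\alpha_i\sigma_i=0$ kills the constant vector $y_j-c$. The terms with $i\in I$ vanish. For $i\notin I$, $\ip{\sigma_i}{y_i-y_j}\le\norm{y_i-y_j}\le\diam F\le1$, so
\[
R\le\sum_{i\notin I}\alpha_i=1-\beta\le1-\alpha_j.
\]

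\textbf{Main obstacle.} The only delicate point is Step 1. One must justify that the subdifferential of the max is exactly the convex hull of the active subdifferentials, which is standard for finite maxima of convex functions, and that each $\ell_1$ subdifferential is generated by full sign vectors $\sigma_i\in\{-1,1\}^d$ rather than vectors with zero entries. The latter holds because a coordinate with $y_j=c_j$ contributes the interval $[-1,1]=\conv\{\pm1\}$. Carathéodory then needs only the dimension $d$, which gives $q\le d+1$.
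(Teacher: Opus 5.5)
Your proof is correct and follows essentially the same route as the paper. First-order optimality at the minimizing center puts $0$ in the convex hull of the active full sign vectors, Carath\'eodory gives $q\le d+1$, and \eqref{eq:alpha} follows from the same identity $R=\sum_i\alpha_i\ip{\sigma_i}{y_i-y_j}$. The only difference is that the paper writes $f$ as a maximum of the finitely many affine functions $c\mapsto\ip{\sigma}{x-c}$ and proves $0\in\conv S$ by a direct separation-and-descent argument. This avoids citing the subdifferential max rule and your cube-face description of the subdifferential of $\norm{\cdot}$.
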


\begin{proof}
For every $x\in\R^d$,
\[
 \norm{x}=\max_{\sigma\in\{-1,1\}^d}\ip{\sigma}{x}.
\]
The continuous function $f(c)=\max_{x\in F}\norm{x-c}$ is coercive,
as $f(c)\ge\norm{x_0-c}$ for any fixed $x_0\in F$, and hence attains
its minimum $R$.
At a minimizing center, let $S$ be the set of signs belonging to the
active affine functions $c\mapsto\ip{\sigma}{x-c}$.
We have $0\in\conv S$.  Indeed, otherwise strict separation of zero
from the compact convex set $\conv S$ gives a vector $u$ with
$\ip{u}{\sigma}>0$ for all $\sigma\in S$.
Moving the center a sufficiently small positive distance in direction
$u$ then strictly decreases all active functions, while all finitely
many inactive functions remain below $R$.  This contradicts minimality.

Choose a convex representation of zero from $S$ with minimal support.
Its coefficients are positive.  The chosen signs are affinely
independent: otherwise a nonzero affine dependence $b_i$ with
$\sum b_i=0$ permits replacement of $\alpha_i$ by
$\alpha_i-tb_i$, where $t=\min_{b_i>0}\alpha_i/b_i$, eliminating
one coefficient while preserving nonnegativity and the representation.
Thus $q\le d+1$.  Choose for each sign an associated active point $y_i$.
Then
$R=\ip{\sigma_i}{y_i-c}\le\norm{y_i-c}\le R$,
which proves~\eqref{eq:contact}.
For any fixed $j$, balance gives
\[
 R=\sum_i\alpha_i\ip{\sigma_i}{y_i-y_j}.
\]
Terms with $y_i=y_j$ vanish; every other inner product is at most
$\norm{y_i-y_j}\le1$.  This proves~\eqref{eq:alpha}.
\end{proof}

\begin{lemma}\label{lem:uniform}
Let $p\ge2$, let $F\subset\ell_1^{2p-1}$ be finite and nonempty
with diameter at most one, and suppose its minimum radius satisfies
\[
 R>\frac{2p-3}{2p-2}.
\]
The contact data in Lemma~\ref{lem:contacts} have
\[
 q=2p,\qquad \sum_{i=1}^{2p}\sigma_i=0.
\]
For every $z\in\R^{2p-1}$,
\begin{equation}\label{eq:universal}
 \sum_{i=1}^{2p}\norm{y_i-z}\ge2pR.
\end{equation}
\end{lemma}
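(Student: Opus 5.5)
From Lemma~\ref{lem:contacts}, applied with $d=2p-1$, we get $q\le 2p$, together with positive weights $\alpha_i$ summing to one and the bound $R\le 1-\alpha_j$ from \eqref{eq:alpha}. The hypothesis $R>(2p-3)/(2p-2)$ therefore gives
\[
\alpha_j\le 1-R<\frac{1}{2p-2}\qquad\text{for every } j.
\]
If $q\le 2p-1$, then $1=\sum\alpha_i<q/(2p-2)$, which forces $q=2p-1$. We also want to exclude this case. The key is to use \eqref{eq:alpha} in its sharper form: the weights attached to equal points add up. The plan is to sum \eqref{eq:alpha} over $j$, weighted suitably. Write $\beta_j=\sum_{\{i:y_i=y_j\}}\alpha_i\ge\alpha_j$. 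Summing $R\le 1-\beta_j$ over $j=1,\dots,q$ gives $qR\le q-\sum_j\beta_j\le q-1$, so $R\le (q-1)/q$. For $q=2p-1$ this reads $R\le (2p-2)/(2p-1)$, which is larger than $(2p-3)/(2p-2)$, so this alone does not finish the argument.

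I would instead exploit the balance relation in \eqref{eq:balance} together with the parity of sign vectors. Each $\sigma_i\in\{-1,1\}^{2p-1}$, and $\sum\alpha_i\sigma_i=0$. Now pair $j$ with the whole family. The proof of \eqref{eq:alpha} actually gives
\[
R=\sum_i\alpha_i\ip{\sigma_i}{y_i-y_j}\le \sum_{i}\alpha_i\,\norm{y_i-y_j}.
\]
Averaging this over $j$ with weights $\alpha_j$ yields
\[
R\le\sum_{i\ne j}\alpha_i\alpha_j\norm{y_i-y_j}\le 1-\sum_i\alpha_i^2,
\]
and $\sum\alpha_i^2\ge 1/q$. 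This again gives only $R\le (q-1)/q$. To separate $q=2p-1$ from $q=2p$, I would use the following fact. The $q$ affinely independent signs with $0$ in the relative interior of their convex hull, in odd dimension $2p-1$, cannot have uniform-type weights when $q<2p$. Concretely, $\sum_i\alpha_i\sigma_{i,r}=0$ in each coordinate $r$ means the weights split into two halves of total weight $1/2$ each, coordinatewise. With every $\alpha_i<1/(2p-2)$ and $q$ small, I would show by a counting argument that some coordinate split is impossible. This step is the main obstacle, and I am unsure of it. What I would try first is a bound on $\sum_{i\ne j}\alpha_i\alpha_j\norm{y_i-y_j}$ that refines the diameter bound coordinatewise. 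The contact identity gives $\norm{y_i-y_j}\ge\ip{\sigma_i-\sigma_j}{(y_i-y_j)}/2$-type relations, which lead to $R\le 1-\tfrac{1}{(2p-1)}\sum\ldots$ and in the end to the threshold $(2p-3)/(2p-2)$ exactly when $q\le 2p-1$. Once $q=2p$ holds, all $\alpha_i<1/(2p-2)$ and $\sum\alpha_i=1$ must be upgraded to $\alpha_i=1/(2p)$, or at least to $\sum\sigma_i=0$. I expect this to come from the same sharpened inequality, which forces equality-type slack that only uniform weights satisfy, with affine independence making the balancing weights unique.

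For \eqref{eq:universal}, once $\sum_i\sigma_i=0$ holds, the argument is direct. For any $z$,
\[
\sum_i\norm{y_i-z}\ge\sum_i\ip{\sigma_i}{y_i-z}=\sum_i\ip{\sigma_i}{y_i-c}+\ip{\textstyle\sum_i\sigma_i}{c-z}=2pR,
\]
using \eqref{eq:contact} and $\sum\sigma_i=0$.
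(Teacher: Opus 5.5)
There is a genuine gap. You never prove $q=2p$ or $\sum_i\sigma_i=0$. You identify the right ingredient, the coordinatewise split of the weights into two halves, but leave the counting as ``the main obstacle''. You then propose to recover the unweighted balance through uniform weights and an unspecified equality argument.

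The missing step is short. Fix a coordinate $r$ and let $P_r=\{i:\sigma_{i,r}=1\}$ and $N_r=\{i:\sigma_{i,r}=-1\}$. From $\sum_i\alpha_i=1$ and $\sum_i\alpha_i\sigma_{i,r}=0$ you get $\sum_{i\in P_r}\alpha_i=\sum_{i\in N_r}\alpha_i=\tfrac12$. Every $\alpha_i<1/(2p-2)$, so a class with at most $p-1$ indices would have weight strictly less than $(p-1)/(2p-2)=\tfrac12$. Hence $|P_r|\ge p$ and $|N_r|\ge p$. Since $P_r$ and $N_r$ partition $\{1,\dots,q\}$, this gives $q\ge 2p$. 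Together with $q\le 2p$, it follows that $q=2p$ and each class has exactly $p$ indices. Therefore $\sum_i\sigma_{i,r}=|P_r|-|N_r|=0$ for every $r$, which is the unweighted balance. Nothing about the individual $\alpha_i$ beyond the upper bound is used; this is exactly the paper's argument.

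Your plan to obtain the balance from uniform weights runs in the wrong direction. Affine independence does make the balancing weights unique. But that only yields $\alpha_i=1/(2p)$ once you already know that uniform weights balance the signs, i.e.\ once $\sum_i\sigma_i=0$ is established. Used the other way round, it is circular. The averaged estimates $R\le 1-\sum_i\alpha_i^2$ and $R\le(q-1)/q$ are too weak, as you noticed, and can be dropped. The vague ``$\ip{\sigma_i-\sigma_j}{y_i-y_j}/2$-type'' refinement is not needed either.

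Your final derivation of \eqref{eq:universal} from $\sum_i\sigma_i=0$ and \eqref{eq:contact} is correct and matches the paper.
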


\begin{proof}
Equation~\eqref{eq:alpha} gives
$\alpha_i\le1-R<1/(2p-2)$.
In each coordinate, equation~\eqref{eq:balance} assigns total weight
$1/2$ to each sign class.  A class with at most $p-1$ indices would
have weight strictly less than $(p-1)/(2p-2)=1/2$.
Both classes therefore contain at least $p$ indices.
Since $q\le2p$, each contains exactly $p$ indices and $q=2p$.
This proves the unweighted sign balance.  Consequently
\[
 \sum_i\norm{y_i-z}
 \ge\sum_i\ip{\sigma_i}{y_i-z}
 =\sum_i\ip{\sigma_i}{y_i-c}
   +\ip{\sum_i\sigma_i}{c-z}=2pR.
\]
No identification of the original weights with uniform weights is needed.
\end{proof}

\section{A quadratic-form inequality for cuts}

Let $V$ have $2p$ labels.  Consider a finite family of cuts indexed by
$\mathcal I$, and let $S_A$ be a nonempty proper shore for each
$A\in\mathcal I$.  Different indices may represent the same unordered
cut.  Its size is $s(A)=\min(|S_A|,2p-|S_A|)$, and its semimetric
$\delta_A(i,j)$ is one when the pair crosses the cut and zero otherwise.
An index is balanced when $s(A)=p$.  Sums over $A$ range over
$\mathcal I$.

\begin{lemma}\label{lem:cut}
Let $p\ge3$ be odd.  Give each index $A\in\mathcal I$ a weight
$w_A\ge0$, and write
\[
 d_{ij}=\sum_Aw_A\delta_A(i,j),\qquad
 \Phi=\sum_A s(A)w_A.
\]
Suppose $d_{ij}\le1$ for all distinct $i,j$, and at most $2p-1$
balanced indices have positive weight.  Then
\begin{equation}\label{eq:cutbound}
 \Phi\le B_p:=\frac{p^3(2p-1)}{p^3+p-1}.
\end{equation}
\end{lemma}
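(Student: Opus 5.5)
The plan is to combine one linear estimate, coming from summing the hypothesis $d_{ij}\le1$ over all pairs, with one quadratic estimate that exploits the parity of $p$ and the limit on the number of balanced indices. Write $b=\sum_{A\text{ balanced}}w_A$ and $\Phi_u=\sum_{A\text{ unbalanced}}s(A)w_A$, so that $\Phi=pb+\Phi_u$. A cut with shore size $s$ separates $s(2p-s)$ pairs, and $s(2p-s)\ge(p+1)s$ when $s\le p-1$. Summing $d_{ij}\le1$ over the $p(2p-1)$ pairs therefore gives
\[
 p^2b+(p+1)\Phi_u\le\sum_{i<j}d_{ij}\le p(2p-1).
\]
This alone yields only $\Phi\le 2p-1$, the Dol'nikov-type bound. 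The gain has to come from showing that the balanced weight $b$ cannot be close to $(2p-1)/p$.

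For the balanced part, I will encode each index by its $\pm1$ vector $x_A\in\R^{2p}$, so that $\delta_A(i,j)=(1-x_{A,i}x_{A,j})/2$. I then form the positive semidefinite matrix $M=\sum_{A\text{ balanced}}w_Ax_Ax_A^{\mathsf T}$. The balanced vectors lie in the hyperplane $\mathbf 1^\perp$, and there are at most $2p-1$ of them. Since $p$ is odd, two balanced $\pm1$ vectors of length $2p\equiv2\pmod 4$ have inner product $\equiv2\pmod4$. Hence $|\ip{x_A}{x_B}|\ge2$ for all balanced $A,B$, and in particular there is no Hadamard-type orthogonal family. I intend to convert this into the inequality
\[
 \|M\|_F^2=\sum_{A,B}w_Aw_B\ip{x_A}{x_B}^2\ge 4p^2\sum_A w_A^2+4\sum_{A\ne B}w_Aw_B ,
\]
and to couple it with an upper bound on the entries of $M$ coming from $d_{ij}\le1$. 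Off the diagonal, $M_{ij}=b-2d^{\mathrm{bal}}_{ij}$. I will bound the balanced part of $d_{ij}$ by $1$ minus the contribution of the unbalanced cuts separating $i$ and $j$. This makes $\sum_{i\ne j}M_{ij}^2$ controlled by $b$ and $\Phi_u$.

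The diagonal contributes $2p\,b^2$. Cauchy--Schwarz over the at most $2p-1$ balanced indices gives $\sum_A w_A^2\ge b^2/(2p-1)$, which turns the parity lower bound into an explicit quadratic lower bound in $b$. The result should be an inequality of the form $\alpha b^2\le \beta b+\gamma\Phi_u$, with coefficients polynomial in $p$. Maximizing $pb+\Phi_u$ subject to this quadratic constraint and the linear constraint above is then a two-variable optimization. The optimum should sit where both constraints are tight, and the claimed value $B_p=p^3(2p-1)/(p^3+p-1)$ should come out of it; the denominator $p^3+p-1$ looks like the result of eliminating $\Phi_u$ between a degree-two and a degree-one relation.

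The main obstacle is the middle step: obtaining an upper bound on $\|M\|_F^2$, or on a suitable quadratic form $\ip{v}{Mv}$, that is sharp enough. The unbalanced cuts interact with $M$ only through the constraint $d_{ij}\le1$, and using it entrywise may lose too much. If the Frobenius route is too weak, I would try a test quadratic form instead. One candidate is the trace identity $\operatorname{tr}M=2pb$ together with $\lambda_{\max}(M)\ge \operatorname{tr}(M^2)/\operatorname{tr}M$. The other is pairing $M$ against the matrix $J-I$ and using $\sum_{i\ne j}d_{ij}$ directly. In either case I would choose the multipliers of the linear and quadratic constraints so that equality is consistent with the expected extremal configuration, namely all $d_{ij}=1$ and all balanced inner products equal to $\pm2$.
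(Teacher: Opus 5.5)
\textbf{There is a genuine gap: the step that proves the lemma is left open.} Your ingredients are right. The pair count $p^2b+(p+1)\Phi_u\le T:=\sum_{i<j}d_{ij}$, the fact that any two balanced vectors have inner product $\equiv2\pmod 4$, and Cauchy--Schwarz over at most $2p-1$ balanced indices together give $\|M\|_F^2\ge 4c_pb^2$, with $c_p=1+(p^2-1)/(2p-1)$. What is missing is an upper bound on $\|M\|_F^2$ that involves the unbalanced cuts, and the routes you sketch do not supply one. Write $e_{ij}=d^{\mathrm{bal}}_{ij}$ and $u_{ij}=d^{\mathrm{unbal}}_{ij}$. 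Then $\|M\|_F^2=8\sum_{i<j}e_{ij}^2-4p^2b^2$. Your entrywise caps only give $\sum_{i<j}e_{ij}^2\le\sum_{i<j}e_{ij}(1-u_{ij})=p^2b-\sum_{i<j}e_{ij}u_{ij}$, so you still need a lower bound on the mixed sum.

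Without that coupling the argument is too weak. Using only $e_{ij}\le1$, the quadratic estimate gives $b\le2p^2/(p^2+c_p)=B_p/p$, independent of $\Phi_u$. Your linear estimate then permits $\Phi$ up to $\bigl(p(2p-1)+B_p\bigr)/(p+1)>B_p$. For $p=5$ this is $385/43>35/4$, too weak even for Theorem~\ref{thm:main}. Your fallbacks do not help either. Pairing $M$ with $J-I$ only reproduces $\sum_{i<j}e_{ij}=p^2b$, and $\lambda_{\max}(M)\ge\operatorname{tr}(M^2)/\operatorname{tr}M$ is useless without an upper bound on $\lambda_{\max}$.

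\textbf{The missing idea.} Evaluate the quadratic form of the \emph{full} distance matrix at the $\pm1$ vector of a balanced cut; there the unbalanced cuts contribute nonnegative squares. For balanced $A$, let $X_A$ be the sum of $d_{ij}$ over the $p^2$ pairs crossing $A$. Then
\[
 x_A^{\mathsf T}Mx_A\le x_A^{\mathsf T}\Bigl(\sum_{\text{all }B}w_Bx_Bx_B^{\mathsf T}\Bigr)x_A
 =-2x_A^{\mathsf T}\mathsf Dx_A=8X_A-4T\le4(2p^2-T).
\]
Equivalently, $\sum_{i<j}e_{ij}u_{ij}\ge\frac b2\sum_{i<j}u_{ij}$, because a balanced cut separates at least half of the pairs separated by any other cut. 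Since $\|M\|_F^2=\sum_Aw_Ax_A^{\mathsf T}Mx_A$, this gives $\|M\|_F^2\le4b(2p^2-T)$. Combined with your lower bound, it yields $T+c_pb\le2p^2$ when $b>0$.

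\textbf{Comparison with the paper and the finish.} The inequality $T+c_pb\le2p^2$ is the paper's key inequality \eqref{eq:keycut}. The paper tests only the heaviest balanced cut $C$ and uses $w_C\ge b/(2p-1)$, where you would average and use Cauchy--Schwarz; the two are interchangeable. The finish is then linear:
\[
 2p^2\ge T+c_pb\ge(p+1)\Phi-(p-c_p)b\ge(p+c_p/p)\Phi,
\]
using $b\le\Phi/p$ and $c_p\le p$. This is $\Phi\le B_p$. Contrary to your expectation, the bound $T\le p(2p-1)$ is needed only in the trivial case $b=0$. The relaxed optimum sits at $\Phi_u=0$, with only the quadratic constraint tight, not where both constraints are tight.
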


\begin{proof}
Set
\[
 T=\sum_{i<j}d_{ij}
   =\sum_A s(A)(2p-s(A))w_A,
 \qquad
 W=\sum_{s(A)=p}w_A,
 \qquad
 c_p=1+\frac{p^2-1}{2p-1}.
\]
All sums over pairs are unordered sums, counting each pair once.
Since an unbalanced cut has $s(A)\le p-1$, we have
\begin{equation}\label{eq:basiccut}
 T\ge(p+1)\Phi-pW,
 \qquad W\le\frac{\Phi}{p}.
\end{equation}
Also $T\le\binom{2p}{2}=p(2p-1)$.

We first show
\begin{equation}\label{eq:keycut}
 T+c_pW\le2p^2.
\end{equation}
If $W=0$, the preceding bound on $T$ proves this immediately.
Suppose $W>0$, and let $\mathcal B$ be the set of positive balanced
indices, of cardinality $t\le2p-1$.  Choose $C\in\mathcal B$ with
$w_C$ maximal.  Then
\[
 W\le t w_C\le(2p-1)w_C.
\]
Choose a shore $S_C$ of size $p$, and put
\[
 v_C=\mathbf1_{S_C}-\mathbf1_{V\setminus S_C},\qquad
 Q_C=-\tfrac12 v_C^{\mathsf T}\mathsf Dv_C,
 \quad \mathsf D=(d_{ij})_{i,j\in V}.
\]
Here $d_{ii}=0$ and $\sum_i(v_C)_i=0$.  For any index $A$, either
choice of shore gives
\[
 v_C^{\mathsf T}\delta_Av_C
 =2\left(\sum_{i\in S_A}(v_C)_i\right)
    \left(\sum_{j\notin S_A}(v_C)_j\right)
 =-2\left(\sum_{i\in S_A}(v_C)_i\right)^2.
\]
It follows that
\begin{equation}\label{eq:squares}
 Q_C=\sum_Aw_A\left(\sum_{i\in S_A}(v_C)_i\right)^2.
\end{equation}
For balanced $A$, the inner sum is
$2|S_A\cap S_C|-p$, an odd integer, so its square is at least one.
For $A=C$, its square is exactly $p^2$.
Unbalanced indices contribute nonnegative squares.  The same statements
hold when another index represents the same cut as $C$.  Using the
maximal choice of $w_C$ gives directly
\begin{equation}\label{eq:lowerQ}
 Q_C\ge W+(p^2-1)w_C
     \ge\left(1+\frac{p^2-1}{2p-1}\right)W=c_pW.
\end{equation}

Let $X_C$ be the sum of $d_{ij}$ over the pairs crossing $C$.
Expanding the quadratic form also gives
\[
 Q_C=X_C-(T-X_C)=2X_C-T\le2p^2-T,
\]
because there are $p^2$ crossing pairs, each of distance at most one.
Together with~\eqref{eq:lowerQ}, this gives
$T+c_pW\le T+Q_C\le2p^2$, proving~\eqref{eq:keycut}.

Finally,
\[
 p-c_p=\frac{(p-1)(p-2)}{2p-1}\ge0.
\]
Equations~\eqref{eq:basiccut} and~\eqref{eq:keycut} imply
\begin{align*}
 2p^2&\ge T+c_pW\\
     &\ge(p+1)\Phi-(p-c_p)W\\
     &\ge\left(p+\frac{c_p}{p}\right)\Phi.
\end{align*}
The last step uses $W\le\Phi/p$ and the displayed sign of $p-c_p$.
Therefore
\[
 \Phi\le\frac{2p^3}{p^2+c_p}
       =\frac{p^3(2p-1)}{p^3+p-1},
\]
including the case $W=0$.
\end{proof}

\begin{remark}\label{rem:comparison}
For every integer $p\ge5$,
\begin{equation}\label{eq:strict}
 \frac{2p(2p-3)}{2p-2}-B_p
 =\frac{p(p^2-5p+3)}{(p-1)(p^3+p-1)}>0.
\end{equation}
Indeed $p^2-5p+3=p(p-5)+3>0$.
For $p=5$, this reads
$B_5=375/43<35/4$, with difference $5/172$.
For $p=3$, the lemma gives $B_3=135/29>9/2$;
the sharper six-label estimate in Section~\ref{sec:five} is needed
for the case $k=1$.
\end{remark}

\section{Coordinate gaps and passage to arbitrary sets}

\begin{lemma}\label{lem:gaps}
Let $y_1,\ldots,y_{2p}\in\R^d$, with repetitions allowed.
For any coordinatewise median $m$, there is a finite indexed family
of nonnegative cut weights, with index set
$\mathcal I=\{1,\ldots,d\}\times\{1,\ldots,2p-1\}$, such that
\[
 \norm{y_i-y_j}=\sum_Aw_A\delta_A(i,j),\qquad
 \sum_{i=1}^{2p}\norm{y_i-m}=\sum_As(A)w_A.
\]
At most $d$ balanced indices have positive weight.
\end{lemma}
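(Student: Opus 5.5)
The proof works one coordinate at a time, and the cut decomposition comes from sorting the values of that coordinate. Fix a coordinate $j\in\{1,\ldots,d\}$ and choose a permutation $\pi$ of $\{1,\ldots,2p\}$ with
\[
 t_1\le t_2\le\cdots\le t_{2p},\qquad t_r=(y_{\pi(r)})_j .
\]
For $r=1,\ldots,2p-1$, let the index $A=(j,r)$ have shore $S_A=\{\pi(1),\ldots,\pi(r)\}$ and weight $w_A=t_{r+1}-t_r\ge0$. The shore is nonempty and proper, and its size is $s(A)=\min(r,2p-r)$. The index set is exactly $\{1,\ldots,d\}\times\{1,\ldots,2p-1\}$, as the statement requires.

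\emph{Pairwise distances.} Take $i=\pi(a)$ and $i'=\pi(b)$ with $a<b$. The pair crosses the cut $(j,r)$ precisely when $a\le r<b$. Summing the weights of these cuts telescopes:
\[
 \sum_{r=a}^{b-1}(t_{r+1}-t_r)=t_b-t_a=|(y_i)_j-(y_{i'})_j| .
\]
Summing over $j$ gives $\norm{y_i-y_{i'}}=\sum_Aw_A\delta_A(i,i')$.

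\emph{Sum of distances to the median.} A coordinatewise median $m$ has $m_j\in[t_p,t_{p+1}]$ for each $j$. Fix $m_j$ in this interval. For each $r$, the interval $(t_r,t_{r+1})$ lies on one side of $m_j$:
\begin{itemize}
\item for $r\le p-1$, it is crossed by exactly the $r$ points with $t\le t_r$ on their way up to $m_j$;
\item for $r\ge p+1$, it is crossed by exactly the $2p-r$ points with $t\ge t_{r+1}$ on their way down to $m_j$.
\end{itemize}
The middle gap $(t_p,t_{p+1})$ is split by $m_j$. Its part $[t_p,m_j]$ is traversed by the $p$ lower points and its part $[m_j,t_{p+1}]$ by the $p$ upper points, so it contributes $p(t_{p+1}-t_p)$ in total. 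Hence
\[
 \sum_i|(y_i)_j-m_j|=\sum_{r=1}^{2p-1}\min(r,2p-r)\,(t_{r+1}-t_r)=\sum_r s(j,r)\,w_{(j,r)} .
\]
Summing over $j$ gives the second identity.

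\emph{Counting balanced indices.} An index $(j,r)$ is balanced only when $r=p$. So each coordinate contributes at most one balanced index, and there are at most $d$ balanced indices in total, whether or not their weights are positive.

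The only step needing care is the middle gap in the median computation: one must check that the value is independent of where $m_j$ sits in $[t_p,t_{p+1}]$, which is exactly why any coordinatewise median works. Repeated values cause no difficulty; they just give zero-weight cuts.
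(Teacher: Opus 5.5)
Your proposal is correct and follows the paper's proof: sort each coordinate, put the gap weight on the prefix cut of the first $r$ labels, telescope for the pairwise distances, and use $t_p\le m_j\le t_{p+1}$ for the median identity, with only $r=p$ giving a balanced index. The one cosmetic difference is that the paper checks the median identity by writing the sum as $\sum_{i>p}t_i-\sum_{i\le p}t_i$ and re-expressing it in gaps, whereas you count how many points cross each gap.
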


\begin{proof}
In a fixed coordinate, order the values as
$a_1\le\cdots\le a_{2p}$, retaining their labels, and put
$g_j=a_{j+1}-a_j\ge0$.
At index $(k,j)$, assign weight $g_j$ to the cut whose shore consists
of the first $j$ labels in coordinate $k$.  The absolute difference for any pair is the sum of the gap
weights separating its labels.  Moreover, since $a_p\le m_k\le a_{p+1}$,
\[
 \sum_{i=1}^{2p}|y_{ik}-m_k|
 =\sum_{i=p+1}^{2p}a_i-\sum_{i=1}^pa_i
 =\sum_{j=1}^{2p-1}\min(j,2p-j)g_j.
\]
Summing over coordinates gives both identities without combining
equal cuts.  Only the index $(k,p)$ is balanced within coordinate $k$,
so there are at most $d$ positive balanced indices.  Ties have zero gap
and may be ordered arbitrarily.  If weights on equal unordered cuts
are combined, the two identities remain unchanged, and the number
of distinct positive balanced cuts is also at most $d$.
\end{proof}

\begin{lemma}\label{lem:extension}
Let $d\ge1$ and $\rho\ge0$.  Suppose every finite nonempty subset of
$\ell_1^d$ of diameter at most one has an ambient enclosing ball of
radius at most $\rho$.  If a nonempty $A\subset\R^d$ satisfies
$\norm{x-y}\le D$ for every $x,y\in A$, where $D\ge0$, then some
$c\in\R^d$ satisfies $\norm{x-c}\le\rho D$ for every $x\in A$.
\end{lemma}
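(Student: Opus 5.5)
We pass from finite sets to arbitrary ones with a compactness argument. The case $D=0$ is trivial, since $A$ is then a single point and we take $c$ to be that point. So assume $D>0$. Rescale by $1/D$: the set $A'=A/D$ has diameter at most one, and a center $c'$ for $A'$ of radius $\rho$ gives the center $c=Dc'$ for $A$ of radius $\rho D$. It therefore suffices to treat $D=1$.

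For each $x\in A$, let
\[
 K_x=\{c\in\R^d:\norm{x-c}\le\rho\},
\]
which is a closed ball of $\ell_1^d$ and hence compact. We want to show that $\bigcap_{x\in A}K_x\neq\varnothing$. By the finite intersection property of compact sets, it is enough that every finite subfamily $K_{x_1},\ldots,K_{x_n}$ has a common point. Put $F=\{x_1,\ldots,x_n\}\subset A$. This set is finite and nonempty, and its diameter is at most one. By hypothesis, some $c$ satisfies $\norm{x_i-c}\le\rho$ for all $i$, so $c$ lies in every $K_{x_i}$.

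To make the compactness step explicit, fix $x_0\in A$. All the sets $K_x$ are closed subsets of the compact set $K_{x_0}$. Any family of closed subsets of a compact space with the finite intersection property has nonempty intersection; here it suffices to check finite subfamilies that include $K_{x_0}$. Any $c$ in the total intersection satisfies $\norm{x-c}\le\rho$ for every $x\in A$, which is the claim.

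There is no substantial obstacle. The one point needing care is that boundedness of $A$ is not assumed separately: it follows from the diameter condition, and in any case the argument uses only compactness of each single ball $K_{x_0}$, not boundedness of $A$ itself.
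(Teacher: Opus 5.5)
Your proof is correct and follows the paper's argument: you handle $D=0$ directly, rescale, and apply the finite intersection property to the balls of radius $\rho$ around the points of $A$, anchored at a fixed compact ball $K_{x_0}$. One slip: the sets $K_x$ are not subsets of $K_{x_0}$, so you should use $K_x\cap K_{x_0}$. These sets are closed in the compact set $K_{x_0}$, and every finite intersection of them is nonempty by the hypothesis applied to $\{x_0,x_1,\ldots,x_n\}$, exactly as in the paper.
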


\begin{proof}
If $D=0$, all points coincide and their common value is a suitable
center.  Suppose $D>0$.  Applying the finite assumption to the points
of any finite nonempty $F\subset A$ divided by $D$, and multiplying
the resulting center by $D$, gives an enclosing radius $r=\rho D$.
For each $x\in A$ put
\[
 C_x=\{c\in\R^d:\norm{x-c}\le r\}.
\]
Fix $x_0\in A$.  The sets $C_x\cap C_{x_0}$ are closed subsets of
the compact ball $C_{x_0}$.  Every finite intersection is nonempty:
apply the finite assertion to its indexed points together with $x_0$.
The finite-intersection property gives
$\bigcap_{x\in A}C_x\ne\varnothing$.  A point in this intersection is
the required center.
\end{proof}

\section{The five-dimensional case}\label{sec:five}

The five-dimensional bound rests on a capacity estimate for cuts
on six labels.

\subsection{The six-label capacity estimate}

Let $V=\{1,\ldots,6\}$.  A \emph{cut} is an unordered partition
$C=\{S,V\setminus S\}$ with $\varnothing\ne S\ne V$.  Its size and
cut semimetric are
\[
 s(C)=\min\{|S|,6-|S|\},\qquad
 \delta_C(i,j)=
 \begin{cases}
 1,&\text{$i$ and $j$ are in different shores of $C$},\\
 0,&\text{otherwise}.
 \end{cases}
\]
There are six cuts of size one, fifteen of size two, and ten of size
three.  We call the last ten cuts \emph{balanced}.

\begin{proposition}\label{prop:fivecut}
Assign a nonnegative weight $w_C$ to every cut of $V$, and put
\[
 d_{ij}=\sum_C w_C\delta_C(i,j).
\]
Suppose that $d_{ij}\le1$ for all $i\ne j$ and that at most five balanced
cuts have positive weight.  Then
\begin{equation}\label{eq:fivecutbound}
 \Phi:=\sum_C s(C)w_C\le\frac92.
\end{equation}
\end{proposition}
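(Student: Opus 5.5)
The plan is to treat \eqref{eq:fivecutbound} as a linear program and certify it by duality. Once the set $\mathcal B$ of balanced cuts allowed positive weight is fixed ($|\mathcal B|\le5$), maximizing $\Phi$ is an LP: the variables are $w_C\ge0$ for the $6+15$ unbalanced cuts and the cuts in $\mathcal B$, and the constraints are $d_{ij}\le1$ for the $15$ pairs. By LP duality, \eqref{eq:fivecutbound} for this $\mathcal B$ is equivalent to finding multipliers $y_{ij}\ge0$ with $\sum_{i<j}y_{ij}\le 9/2$ and
\[
 \sum_{i<j}y_{ij}\delta_C(i,j)\ge s(C)
\]
for every cut $C$ that is unbalanced or lies in $\mathcal B$. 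Enlarging $\mathcal B$ only enlarges the feasible set, so it suffices to treat $|\mathcal B|=5$. The symmetric group $S_6$ acts on the ten balanced cuts, so only the orbits of $5$-subsets need a certificate.

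Before enumerating, I would look for a uniform certificate modelled on the proof of Lemma~\ref{lem:cut}. Put $T=\sum_{i<j}d_{ij}=5W_1+8W_2+9W$, where $W_1,W_2,W$ are the total weights on cuts of sizes $1,2,3$. Then $\Phi=W_1+2W_2+3W$ and $T\le15$. For each of the (at least five) balanced cuts $M\notin\mathcal B$, form $v_M$ and $Q_M=-\tfrac12 v_M^{\mathsf T}\mathsf D v_M$. As in \eqref{eq:squares}, $Q_M=\sum_A w_A(\sum_{i\in S_A}(v_M)_i)^2$.

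The contributions to $Q_M$ are as follows. A size-one cut contributes $w_A$. A size-two cut $\{a,b\}$ contributes $4w_A$ when $M$ does not separate $a,b$, and $0$ otherwise. A balanced cut in $\mathcal B$ contributes exactly $w_A$, since $M\ne A$. On the other side, $Q_M=2X_M-T\le18-T$. Averaging over the five missing cuts $M$ turns the size-two contributions into an average weight that depends on how many missing cuts separate each pair $\{a,b\}$. The goal is then a convex combination of $T\le15$ and the averaged inequality $T+\overline{Q}\le18$ whose coefficients dominate $(1,2,3)$ on $(W_1,W_2,W)$ with right-hand side $9/2$.

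A quick check shows why $T$ alone, or a single $Q_M$ alone, is not enough. They give only $\Phi\le5$ and $\Phi\le5.4$ respectively. The gain has to come from the size-two term $4w_A$ together with the fact that $\mathcal B$ misses half the balanced cuts. Note also that the extremal configuration of all ten balanced cuts at weight $1/6$, which gives $\Phi=5$, must be excluded precisely through the zero weights on the missing cuts.

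The main obstacle is that the separation counts of pairs by the missing cuts depend on the orbit of $\mathcal B$, so a single averaged inequality may fail for some orbits. My fallback is the explicit case split. I would list the $S_6$-orbits of $5$-subsets of balanced cuts, grouped by how often each label pair is separated by cuts in $\mathcal B$; there are only a handful of such orbits. For each orbit I would exhibit dual multipliers $y_{ij}$ with total $9/2$, found by solving the small LP and rationalizing. Each certificate then needs only a finite check of $31$ covering inequalities. In every orbit, I would expect the binding constraints to be the balanced cuts of $\mathcal B$ and the size-two cuts inside the sparsely separated pairs.
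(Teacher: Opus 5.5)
Your fallback is the paper's proof. The paper extends the positive balanced support to five cuts and reduces by the $S_6$-action. For each representative it then exhibits nonnegative pair weights of total at most $9/2$ whose capacity is at least $s(C)$ on every unbalanced cut and every allowed balanced cut. Your multipliers $y_{ij}$ are the paper's $a_{ij}$, and your weak-duality step is \eqref{eq:five-duality}--\eqref{eq:five-certificate}.

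The gap is that the proposal stops exactly where the content begins. By strong duality, the existence of such multipliers is equivalent to the proposition itself, so ``solve the small LP and rationalize'' restates the claim rather than proving it. You still owe two things:
\begin{enumerate}
\item A complete list of orbit representatives, with a proof that it is complete. Three suffice. Getting them requires exchanging the base label with an ordinary one, which pairs up the six five-edge graph types on $K_5$ (Lemma~\ref{lem:five-supports}).
\item An explicit, checked certificate for each representative. For example, in support $B$ the paper uses weight $1/4$ on every pair plus an extra $3/4$ on the pair $56$, which all five allowed cuts separate.
\end{enumerate}

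Your primary, uniform route cannot replace this, and your diagnosis of why it falls short is off. In $\overline{Q}$ every balanced cut of $\mathcal B$ has coefficient exactly $1$, since each missing $M$ differs from it. So in any combination $\mu(T\le15)+\nu(T+\overline{Q}\le18)$ such a cut has coefficient $9\mu+10\nu$. Dominating $\Phi$ forces $9\mu+10\nu\ge3$, and then $15\mu+18\nu\ge\tfrac53(9\mu+10\nu)\ge5$.

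This scheme therefore never gets below $5$ in any orbit, whatever the size-two terms contribute. The bottleneck is the balanced cuts themselves, which aggregated inequalities of this kind cannot control. Even the paper's sharper variant, using $C\in\mathcal B$ of maximal weight, gives only $B_3=135/29>9/2$ (Remark~\ref{rem:comparison}). What breaks the barrier is using the individual constraints $d_{ij}\le1$ with orbit-dependent, non-uniform weights, as in the three certificates.
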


To describe the balanced supports, fix $6$ as the base label.
For a balanced cut, let $T$ be its unique shore not
containing $6$.  Its complement in $\{1,\ldots,5\}$ is an edge $e$ of
$K_5$.  Thus the ten balanced cuts correspond to the ten edges, with
\begin{equation}\label{eq:five-edge-cut}
 T(e)=\{1,\ldots,5\}\setminus e.
\end{equation}
We abbreviate the edge $\{i,j\}$ by $ij$.

\begin{lemma}\label{lem:five-supports}
Every family of at most five balanced cuts can be extended to a family
of five and then relabeled by a permutation of $V$ so that its
representing edges are one of
\begin{align*}
 A&=\{12,23,34,45,15\},\\
 B&=\{13,14,23,24,34\},\\
 C&=\{12,13,23,14,15\}.
\end{align*}
\end{lemma}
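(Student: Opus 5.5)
The plan is to reduce the claim to a classification of $5$-edge graphs on the vertex set $\{1,\ldots,5\}$. First, a family of fewer than five balanced cuts can be extended by adding arbitrary further balanced cuts (with weight zero), since there are ten in total. So it suffices to handle exactly five balanced cuts, i.e.\ a $5$-edge subgraph $G$ of $K_5$ via~\eqref{eq:five-edge-cut}. Permutations of $V$ fixing $6$ act on these graphs as ordinary relabelings of $K_5$. Hence modulo $S_5$ we only need the isomorphism types of $5$-edge graphs on five vertices. There are six of them:
\begin{itemize}
\item the unicyclic ones: $C_5$ (this is $A$), $C_4$ plus a pendant edge, a triangle with two pendant edges at one vertex (this is $C$), a triangle with pendant edges at two different vertices (the bull), and a triangle with a path of length two attached;
\item the disconnected one: $K_4$ minus an edge plus an isolated vertex (this is $B$).
\end{itemize}
A graph with five edges on at most four vertices must be $K_4-e$, and otherwise a connected graph with five edges on five vertices is unicyclic. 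This case check is routine.

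The remaining three types must be moved to $A$, $B$ or $C$ using permutations of $V$ that move the base label $6$. For this I will compute the effect of the transposition $\tau=(a\,6)$ with $a\in\{1,\ldots,5\}$ on edges.
\begin{itemize}
\item If $a\in e$, then $T(e)$ contains neither $a$ nor $6$, so it is fixed by $\tau$, and the edge stays $e$.
\item If $a\notin e$, then $a\in T(e)$, and the image of $T(e)$ contains $6$. The new shore avoiding $6$ is therefore the image of $e\cup\{6\}$, namely $e\cup\{a\}$. The new edge is $\{1,\ldots,5\}\setminus(e\cup\{a\})$.
\end{itemize}
So $\tau$ fixes the edges at $a$ and replaces every edge not at $a$ by the complementary pair in the $4$-set $\{1,\ldots,5\}\setminus\{a\}$. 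This is a perfect-matching swap inside $K_4$ on the other four vertices.

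With this rule, each of the three leftover types is handled by a direct check, choosing $a$ well. For the bull, take $a$ to be a degree-$3$ triangle vertex. For the $C_4$-with-pendant, take $a$ to be the vertex of degree three. For the triangle with a tail of length two, try the middle tail vertex, or failing that a triangle vertex. In each case, compute the image edge set and identify its isomorphism type by degree sequence and cycle structure, aiming for $A$, $B$ or $C$. The types $A$, $B$, $C$ then need no further work.

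The main obstacle is this last finite verification: picking, for each of the three leftover types, a vertex $a$ whose transposition lands in $\{A,B,C\}$. If one transposition does not suffice, I would compose two such moves. Since $S_6$ acts on the $\binom{10}{5}$ five-subsets of balanced cuts, the statement amounts to saying the orbits are represented by $A$, $B$, $C$. As a fallback, one can enumerate the orbits of $S_6$ on $5$-subsets of the ten balanced cuts (equivalently, on $5$-sets of the ten $3|3$ splits) and check that exactly three orbits occur.
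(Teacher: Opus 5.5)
Your reduction is the same as the paper's. You extend to five cuts, classify the six $5$-edge graphs on $\{1,\ldots,5\}$, and use the base exchange $\tau=(a\,6)$ to pair the three leftover types with $A$, $B$, $C$; your rule for $\tau$ is correct. The gap is that the content of the lemma is exactly this final finite check. You leave it undone, and your prescription for the bull fails.

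Write the bull as $12,13,23,14,25$ (triangle $123$, pendants at $1$ and $2$). Exchanging $a=1$, a degree-three triangle vertex, fixes $12,13,14$ and sends $23\mapsto45$, $25\mapsto34$. This gives $12,13,14,34,45$: the triangle $134$ with pendants at $1$ and $4$, which is again a bull. By the symmetry $1\leftrightarrow2$, $4\leftrightarrow5$ the same holds for $a=2$. The leaves also give a bull; for $a=4$ one gets $13,14,15,25,35$. So every exchange at a vertex other than the triangle vertex of degree two sends a bull to a bull. Composing moves cannot help unless you eventually use that vertex. The choice that works is $a=3$: it gives $13,23,14,25,45$, the five-cycle through $3,1,4,5,2$ in that order, i.e.\ type $A$. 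This is the paper's first table row read backwards, since each exchange is an involution.

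Your other two prescriptions are correct and only need to be written out.
\begin{itemize}
\item For the four-cycle $12,23,34,14$ with pendant $15$, exchanging $a=1$ gives $12,14,15,25,45$. This is $K_4$ minus $24$ on $\{1,2,4,5\}$ with $3$ isolated (type $B$), as in the paper.
\item For the triangle $123$ with tail $14,45$, exchanging the middle vertex $a=4$ gives $14,15,45,25,35$. This is the triangle $145$ with two leaves at $5$ (type $C$). The paper exchanges the end vertex $5$ instead, which also works.
\end{itemize}
With these three computations your proof is complete and coincides with the paper's.

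Two minor points. Your classification should also exclude a disconnected graph with components of sizes two and three; it has at most $1+3=4$ edges. The orbit-enumeration fallback merely restates the lemma, and ``exactly three orbits'' is stronger than what the lemma asserts.
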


\begin{proof}
There are ten balanced cuts, so a family of fewer than five can be
extended.  Consider its representing graph on five vertices with five
edges.  If disconnected, it must have a component on four vertices:
components of size at most three allow at most
$\binom32+\binom22=4$ edges in total.  It is therefore $K_4$ with one
edge deleted, together with an isolated vertex.

If connected, the graph has a spanning tree with four edges and exactly
one additional edge.  It has a unique cycle, of length three, four, or
five.  A five-cycle uses all vertices; a four-cycle has one attached
leaf.  For a triangle, the two remaining vertices form a length-two
tail, two leaves at one triangle vertex, or leaves at two different
triangle vertices.  These and the disconnected graph are all six
possibilities up to permutations of $\{1,\ldots,5\}$.

It remains to use permutations involving the base.  Exchanging labels
$6$ and $v\in\{1,\ldots,5\}$ fixes a representing edge containing $v$.
An edge $e$ not containing $v$ is replaced by its complementary edge
in $\{1,\ldots,5\}\setminus\{v\}$.  To check this rule, apply the
transposition to the two shores and then choose the shore excluding
$6$ in \eqref{eq:five-edge-cut}.  If $v\notin e$, the new chosen shore is
$e\cup\{v\}$, giving exactly the asserted complement.

The following three applications of this rule pair the six graph types:
\begin{center}
\small
\begin{tabular}{@{}lll@{}}
\toprule
Starting edges & Exchanged base label & Resulting edges\\
\midrule
$12,23,34,45,15$ & $5$ & $34,14,12,45,15$\\
$12,23,34,14,15$ & $1$ & $12,45,25,14,15$\\
$12,23,13,14,45$ & $5$ & $34,14,24,23,45$\\
\bottomrule
\end{tabular}
\end{center}
In the first row, a five-cycle becomes a triangle with leaves at two
different vertices.  In the second, a four-cycle with a leaf becomes
$K_4$ minus an edge and an isolated vertex.  In the third, a triangle
with a length-two tail becomes a triangle with two leaves at one
vertex.  Ordinary permutations identify the representatives of these
three pairs with $A$, $B$, and $C$, respectively.  The base exchanges
are involutions, so the same reductions work in either direction.
\end{proof}

For nonnegative weights $a_{ij}$ on the fifteen unordered pairs of $V$,
define the capacity of a cut by
\[
 \operatorname{cap}_a(S)=\sum_{\substack{i\in S\\j\notin S}}a_{ij}.
\]
This does not depend on the choice of shore.  If
$\deg_a(i)=\sum_{j\ne i}a_{ij}$, then
\begin{equation}\label{eq:five-capacity-degree}
 \operatorname{cap}_a(S)=\sum_{i\in S}\deg_a(i)
             -2\sum_{\{i,j\}\subset S}a_{ij}.
\end{equation}
Expanding the sum gives
\begin{equation}\label{eq:five-duality}
 \sum_{i<j}a_{ij}d_{ij}
   =\sum_C w_C\operatorname{cap}_a(C).
\end{equation}
Thus, if the capacity is at least one on every singleton cut, at least
two on every cut of size two, and at least three on each permitted
balanced cut, then nonnegativity gives
\begin{equation}\label{eq:five-certificate}
 \Phi\le\sum_{i<j}a_{ij}d_{ij}\le\sum_{i<j}a_{ij}.
\end{equation}
Balanced cuts of weight zero impose no condition in this argument.

\begin{proof}[Proof of Proposition~\ref{prop:fivecut}]
Extend the positive balanced support to five cuts, giving added cuts
weight zero, and apply Lemma~\ref{lem:five-supports}.  Relabeling preserves
all pairwise upper bounds, nonnegative weights, and $\Phi$.  We give a
certificate of total weight at most $9/2$ in each representative.

\smallskip
\noindent\emph{Support $A$.}
On the ordinary vertices $1,\ldots,5$, give an edge of the five-cycle
weight $2/15$ and a nonedge weight $8/15$.  Give each pair $i6$ weight
$1/5$.  The total weight is
\[
 5\frac{2}{15}+5\frac{8}{15}+5\frac15=\frac{13}{3}<\frac92.
\]
The weighted degree of each ordinary vertex is $23/15$, and that of
vertex $6$ is one.  Singleton capacities are therefore at least one.
The size-two capacities, according as the pair is a cycle edge, an
ordinary nonedge, or a pair containing the base, are
\[
 2\frac{23}{15}-2\frac{2}{15}=\frac{14}{5},\qquad
 2\frac{23}{15}-2\frac{8}{15}=2,\qquad
 1+\frac{23}{15}-\frac25=\frac{32}{15}.
\]
All are at least two.  For any allowed balanced shore $T(e)$, deleting
the endpoints of the cycle edge $e$ leaves a triple with two cycle
edges and one ordinary nonedge.  By \eqref{eq:five-capacity-degree}, its
capacity is
\[
 3\frac{23}{15}
 -2\left(2\frac{2}{15}+\frac{8}{15}\right)=3.
\]

\smallskip
\noindent\emph{Support $B$.}
Give all fifteen pairs weight $1/4$ and add $3/4$ to pair $56$.
The total is $15/4+3/4=9/2$.  Singleton capacities are at least $5/4$,
and size-two capacities are at least $8/4=2$.  Vertex $5$ is isolated
in the representing graph $B$, so every allowed shore $T(e)$ contains
$5$ and excludes $6$.  Its cut separates the pair $56$, and hence its
capacity is $9/4+3/4=3$.

\smallskip
\noindent\emph{Support $C$.}
Start with weight $1/4$ on every pair and set
\[
 a_{16}=0,\qquad
 a_{12}=a_{13}=a_{46}=a_{56}=\frac12,
\]
leaving all other weights unchanged.  The total is
$15/4-1/4+4/4=9/2$.  Every vertex has weighted degree $3/2$.
Consequently singleton capacities are $3/2$, and every size-two
capacity is
\[
 3-2a_{ij}\ge2.
\]
The five allowed shores are
\[
 345,\quad245,\quad145,\quad235,\quad234,
\]
where a triple denotes its set of labels.  Each has all three internal
pair weights equal to $1/4$.  Its capacity is therefore
$3(3/2)-2(3/4)=3$.

In all three cases, \eqref{eq:five-certificate} proves \eqref{eq:fivecutbound}.
Every pair weight used is nonnegative, so the argument uses only the
upper bounds $d_{ij}\le1$.
\end{proof}

\subsection{The radius bound and sharpness}

\begin{theorem}\label{thm:five}
Every nonempty $A\subset\R^5$ whose pairwise $\ell_1$ distances are
at most $D\ge0$ has an ambient enclosing ball of radius at most
$3D/4$.  Moreover,
\[
 J(\ell_1^5)=\frac34.
\]
\end{theorem}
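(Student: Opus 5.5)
By Lemma~\ref{lem:extension} with $\rho=3/4$ and $d=5$, it is enough to treat a finite nonempty $F\subset\ell_1^5$ with $\diam F\le1$ and show that its minimum radius $R$ satisfies $R\le3/4$. We argue by contradiction and assume $R>3/4$. Here $5=2p-1$ with $p=3$, and the hypothesis of Lemma~\ref{lem:uniform} reads $R>(2p-3)/(2p-2)=3/4$. That lemma therefore gives six contact points $y_1,\dots,y_6\in F$, possibly repeated, whose signs sum to zero. It also gives the universal lower bound \eqref{eq:universal}, namely $\sum_{i=1}^6\norm{y_i-z}\ge6R$ for every $z\in\R^5$.

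Next we take $z=m$, a coordinatewise median of $y_1,\dots,y_6$, and apply Lemma~\ref{lem:gaps} with $d=5$. This produces nonnegative cut weights on the six labels with $d_{ij}=\norm{y_i-y_j}\le1$ and $\sum_i\norm{y_i-m}=\Phi$. At most five balanced indices carry positive weight. After merging indices that represent the same unordered cut, which the last sentence of that lemma's proof allows, at most five balanced cuts have positive weight. Proposition~\ref{prop:fivecut} now applies and gives $\Phi\le9/2$. Combining with \eqref{eq:universal}, we get $6R\le\Phi\le 9/2$, so $R\le3/4$, which contradicts $R>3/4$. This proves the upper bound, and hence $J(\ell_1^5)\le3/4$.

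For sharpness we need a set $A\subset\R^5$ with $\rad A/\diam A=3/4$. The natural choice comes from a Hadamard matrix of order $4$. We take $A$ to be the four rows $h_1,\dots,h_4\in\{\pm1\}^4$ of such a matrix, embedded in $\R^5$ with fifth coordinate $0$. Distinct rows differ in exactly $2$ coordinates, so $\diam A=4$. For any center $c$, summing gives
\[
\textstyle\sum_i\norm{h_i-c}\ge\sum_{j=1}^4\sum_i|h_{ij}-c_j|,
\]
and in each column with two entries $+1$ and two entries $-1$ the inner sum is at least $4$. If a column is constant it cannot be balanced, so we first normalize the matrix so that one column is all ones and the other three are balanced. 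We then expect the average distance to be at least $3$, which gives $\rad A\ge3=\tfrac34\cdot4$.

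The fifth coordinate does not help the center, but the constant column makes the count above fall short: it only gives $\sum_i\norm{h_i-c}\ge12$, i.e.\ an average of $3$, which is in fact the bound we want. So the sharpness step is this balanced-column counting, $3\cdot 4=12$ over four points. Alternatively, one can cite Dol'nikov's lower bound $J(\ell_1^{4})\ge3/4$ together with the monotonicity of $J(\ell_1^d)$ in $d$, which holds because $\ell_1^4$ embeds isometrically as a coordinate subspace of $\ell_1^5$ and an optimal center can be projected onto that subspace. The main obstacle is therefore only the bookkeeping needed to check that Proposition~\ref{prop:fivecut}'s hypothesis on balanced cuts survives the merging of equal cuts. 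Lemma~\ref{lem:gaps} already asserts this, so the argument is essentially an assembly of the earlier results.
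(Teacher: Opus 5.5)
Your proposal is correct and follows the paper's proof: the upper bound is the same contradiction via Lemma~\ref{lem:uniform} with $p=3$, a coordinatewise median, Lemma~\ref{lem:gaps} with equal cuts merged, Proposition~\ref{prop:fivecut}, and then Lemma~\ref{lem:extension}. Your sharpness set is the paper's four-point example scaled by $4$ with the constant Hadamard column kept; that column and the zero fifth coordinate only add nonnegative terms, so the three balanced columns give $\sum_i\norm{h_i-c}\ge12$ and $\rad A\ge3=\tfrac34\diam A$, though you should clean up the wording about the count ``falling short'' and ``expecting'' the average, since $12$ is exactly the bound needed.
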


\begin{proof}
First take a finite nonempty set $F\subset\R^5$ with diameter at most
one and let $R$ be its minimum radius.  If $R>3/4$, apply
Lemma~\ref{lem:uniform} with $p=3$ to obtain six contacts $y_i\in F$
such that for every $z\in\R^5$,
\[
 \sum_{i=1}^6\norm{y_i-z}\ge6R>\frac92.
\]
Choose a coordinatewise median $m$ of these contacts.  Apply
Lemma~\ref{lem:gaps} and combine weights on equal unordered cuts.
The weights preserve all original pairwise distances and the median
functional, and at most five distinct balanced cuts have positive
weight.  Proposition~\ref{prop:fivecut} therefore gives
\[
 \sum_{i=1}^6\norm{y_i-m}=\Phi\le\frac92,
\]
a contradiction.  Thus $R\le3/4$ for every finite diameter-one set.
Lemma~\ref{lem:extension} with $\rho=3/4$ proves the stated upper
bound for all nonempty sets, including $D=0$.

For sharpness, take the four points
\[
 \begin{split}
 x_1&=\tfrac14(1,1,1,0,0),\qquad
 x_2=\tfrac14(1,-1,-1,0,0),\\
 x_3&=\tfrac14(-1,1,-1,0,0),\qquad
 x_4=\tfrac14(-1,-1,1,0,0).
 \end{split}
\]
Every pair of distinct points differs in exactly two coordinates,
so its distance is one.  The zero center has distance $3/4$ from
each point.  In every one of the first three coordinates, two points
have value $1/4$ and two have value $-1/4$.  For any ambient center
$c\in\R^5$,
\begin{align*}
 \sum_{i=1}^4\norm{x_i-c}
 &=2\sum_{j=1}^3\bigl(|c_j-1/4|+|c_j+1/4|\bigr)
     +4(|c_4|+|c_5|)\\
 &\ge3.
\end{align*}
The inequality uses $|t-1/4|+|t+1/4|\ge1/2$.  Hence at least one
point has distance at least $3/4$ from every center.  The four-point
set has diameter one and ambient radius exactly $3/4$, which proves
the matching lower bound.
\end{proof}

\section{Proof of the main theorem}

\begin{proof}[Proof of Theorem~\ref{thm:main}]
For $k=1$, the assertion is exactly the upper bound in
Theorem~\ref{thm:five}.  Suppose henceforth that $k\ge2$ and put
$p=2k+1$, so $p\ge5$ is odd and $d=2p-1$.
Let $F\subset\ell_1^d$ be finite and nonempty with diameter at most
one.  Let $R$ be its minimum radius and suppose
\[
 R>\frac{2p-3}{2p-2}=\frac{4k-1}{4k}.
\]
Lemma~\ref{lem:uniform} gives $2p$ contacts with
\eqref{eq:universal}.  Choose a coordinatewise median $m$ of those
contacts and use Lemma~\ref{lem:gaps}.  The resulting indexed cut
metric has all pairwise distances at most one and at most
$d=2p-1$ positive balanced indices.  Lemma~\ref{lem:cut} and
\eqref{eq:strict} give
\[
 2pR\le\sum_i\norm{y_i-m}=\Phi
 \le B_p<\frac{2p(2p-3)}{2p-2},
\]
a contradiction.  This proves the finite diameter-one assertion.
Apply Lemma~\ref{lem:extension} with
$\rho=(4k-1)/(4k)$ to obtain the conclusion for the given nonempty
$A$ and every $D\ge0$.
\end{proof}

\begin{remark}
The bound $B_p/(2p)$ applies only to the special contact configuration
forced by a radius above the threshold.  It is not an unconditional
upper bound for $J(\ell_1^{2p-1})$; making that substitution would
contradict the Hadamard lower examples.
\end{remark}

\section{Hadamard matrices and sharpness}

We recall the two classical upper bounds.  Lemma~\ref{lem:contacts} has some
$\alpha_j\ge1/q\ge1/(d+1)$, whence
$R\le1-\alpha_j\le d/(d+1)$.
For even $d=2m$, if $R>(2m-1)/(2m)$, every
$\alpha_i<1/(2m)$.  Each sign class of weight $1/2$ would then need
at least $m+1$ indices, contradicting $q\le2m+1$.
Thus
\[
 J(\ell_1^d)\le\frac d{d+1},\qquad
 J(\ell_1^{2m})\le\frac{2m-1}{2m}.
\]
The same scaling and compactness argument extends these finite
estimates to all bounded sets.

\begin{proposition}\label{prop:hadamard}
If a Hadamard matrix of order $h\ge2$ exists, then for every $d\ge h-1$
there is an $h$-point set in $\ell_1^d$ of diameter one and ambient
radius $(h-1)/h$.
\end{proposition}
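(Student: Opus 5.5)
The plan follows the sharpness argument for Theorem~\ref{thm:five}, which is the case $h=4$ of this construction. Let $H$ be a Hadamard matrix of order $h$. Normalize it so that its first column is all ones; this is done by multiplying rows by $\pm1$, which preserves the Hadamard property. Delete that column to get an $h\times(h-1)$ sign matrix $M$. The rows of $M$ are $u_1,\dots,u_h\in\{-1,1\}^{h-1}$, and we put
\[
 x_i=\tfrac1h(u_i,0,\dots,0)\in\R^d ,
\]
padded with $d-(h-1)\ge0$ zero coordinates.

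For the diameter, take distinct rows of $H$. They are orthogonal and agree in the first column, so they agree in exactly $h/2-1$ of the remaining $h-1$ positions and differ in exactly $h/2$ of them. Hence $\norm{x_i-x_j}=\tfrac1h\cdot2\cdot\tfrac h2=1$ for $i\ne j$. For $h=2$ this gives two points at distance one, and the argument below still applies. The zero center is at distance $(h-1)/h$ from every point, so $\rad\le(h-1)/h$.

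For the lower bound on the radius, use column balance. Each non-first column of $H$ is orthogonal to the all-ones column, so it contains exactly $h/2$ entries $+1$ and $h/2$ entries $-1$. Then for any $c\in\R^d$,
\[
 \sum_{i=1}^h\norm{x_i-c}
 =\frac h2\sum_{j=1}^{h-1}\bigl(|c_j-\tfrac1h|+|c_j+\tfrac1h|\bigr)
   +h\sum_{j\ge h}|c_j|
 \ge\frac h2(h-1)\frac2h=h-1,
\]
using $|t-a|+|t+a|\ge2a$. So some point lies at distance at least $(h-1)/h$ from $c$, and the ambient radius equals $(h-1)/h$.

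None of these steps is a real obstacle. The only care needed is the normalization that makes the first column constant, together with deriving the row-agreement count and the column balance from orthogonality. The case $h=2$ is degenerate but covered by the same computation.
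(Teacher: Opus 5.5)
Your proposal is correct and follows the paper's proof essentially step for step. Both normalize the first column to ones by row sign changes, delete it, scale by $1/h$ and pad with zeros. Both obtain unit distances from row orthogonality and the lower bound on the radius by averaging over the points using the $h/2$-versus-$h/2$ sign balance of each remaining column; your explicit agreement count $h/2-1$ and remark on $h=2$ are harmless additions.
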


\begin{proof}
Normalize the first column of the Hadamard matrix to all ones by
changing row signs.  Its rows and columns are orthogonal.
Delete the first column, divide each row by $h$, and append zeros
to obtain points $x_1,\ldots,x_h\in\R^d$.
Two distinct rows differ in exactly $h/2$ entries; none of these is
in the deleted column.  Their distance is thus $(h/2)(2/h)=1$.
Each point has norm $(h-1)/h$.

Every remaining Hadamard column is orthogonal to the constant column,
and therefore has $h/2$ entries of each sign.
For every ambient center $c\in\R^d$,
\begin{align*}
 \frac1h\sum_{i=1}^h\norm{x_i-c}
 &=\frac12\sum_{j=1}^{h-1}
     \bigl(|c_j-1/h|+|c_j+1/h|\bigr)
     +\sum_{j=h}^d|c_j|\\
 &\ge\frac{h-1}{h}.
\end{align*}
For every center, at least one point has distance at least this value.
The zero center attains it for all points, proving the assertion.
\end{proof}

\begin{proof}[Proof of Corollary~\ref{cor:plateau}]
Use Proposition~\ref{prop:hadamard} with $h=4k$ for the three lower
bounds.  The classical dimension bound, the even-dimensional bound,
and Theorem~\ref{thm:main}, respectively, give the matching upper
bounds for every $k\ge1$.

For the unconditional family, start with $H_1=(1)$ and iterate
\[
 H_{2h}=\begin{pmatrix}H_h&H_h\\H_h&-H_h\end{pmatrix}.
\]
If $H_hH_h^{\mathsf T}=hI_h$, direct block multiplication gives
$H_{2h}H_{2h}^{\mathsf T}=2hI_{2h}$.
Thus a Hadamard matrix of order $2^r$ exists for every $r\ge0$.
For $r\ge2$, set $4k=2^r$.
\end{proof}

\section*{Lean formalization}

Lean proofs of Theorem~\ref{thm:main} and the five-dimensional
sharpness result are available at
\url{https://github.com/lixiaoms/Jung}.

\begin{itemize}
\item
\path{lean-toolchain} specifies the Lean version;
\path{lakefile.lean} defines the proof libraries;
\path{lake-manifest.json} fixes the direct and transitive dependency revisions.
\item
\path{Jung.lean} and \path{Jung5.lean} are the library entry points.
\path{Jung/} contains the proof of the general bound, with the statement
for every $k\ge1$ in \path{Jung/Main.lean}.
\path{Jung5/} contains the five-dimensional proof, the finite capacity
certificates, and the sharpness example; its main statements are in
\path{Jung5/Main.lean}.
\item
\path{Audit.lean} prints the types and transitive axiom dependencies
of 151 declarations.  The allowed axioms are
\texttt{propext}, \texttt{Classical.choice}, and \texttt{Quot.sound}.
\item
\path{verify.py} checks the proof and configuration hashes, builds
both libraries, and runs \path{Audit.lean}.  It checks that every
expected axiom report is present and uses only the three axioms listed above.
\item
The audit output is in \path{verification/audit.log}.
\path{verification/source_manifest.json} records the proof and
configuration hashes; \path{verification/summary.json} records the
verified statements and versions.  \path{release_manifest.json}
lists the distributed files and their hashes.
\item
\path{BUILD.md} gives the commands for reproducing the build and audit;
\path{PROOF_GUIDE.md} relates the Lean modules to the proof.
\end{itemize}

\section*{Acknowledgement}
An LLM assisted with the research and preparation of this manuscript.

\end{document}